\documentclass[12pt]{amsart}

\usepackage{amssymb}
\usepackage{amsbsy}
\usepackage{amscd}
\usepackage{amsmath}
\usepackage{amsthm}
\usepackage[mathscr]{eucal}

\newtheorem{theorem}{Theorem}[section]
\newtheorem{lemma}[theorem]{Lemma}
\numberwithin{defn}{section}
\newtheorem{remark}[theorem]{Remark}
\newtheorem{problem}[theorem]{Problem}
\numberwithin{equation}{section}

\newcommand{\cO}{{\mathcal O}}

\newcommand{\cE}{{\mathcal E}}
\newcommand{\cP}{{\mathcal P}}
\newcommand{\C}{\mathbb{C}}
\newcommand{\Z}{\mathbb{Z}}
\newcommand{\R}{\mathbb{R}}
\newcommand{\K}{\mathbb{K}}
\newcommand{\M}{\mathbb{M}}

\newcommand{\id}{\mathrm{id}}

\def\6t#1#2#3#4#5#6{\begin{CD}
#1@>>>#2 @>>>#3\\
@AAA@. @VV{\delta}V\\
#6@<<< #5 @<<<#4
\end{CD}}

\def\ex#1#2#3{
0\longrightarrow #1\longrightarrow 
#2\longrightarrow #3\longrightarrow 0}

\def\cp#1{(#1^{\otimes 2})\rtimes_\theta \Z_2}

\title[The $K$-theory of the flip automorphisms]{The $K$-theory of the flip automorphisms}

\author[M. Izumi]{Masaki Izumi}

\address{Department of Mathematics\\ Graduate School of Science\\
Kyoto University\\ Sakyo-ku, Kyoto 606-8502\\ Japan}

\email{izumi@math.kyoto-u.ac.jp}

%%%%%%%%%%%%%%%%%%%%%%%%%%%%%%%%%
% \subjclass[2010]{AMS MSC}
%%%%%%%%%%%%%%%%%%%%%%%%%%%%%%%%%
%%example%%%%%%%%%%%%%%%%%%%%%%%%%%
%\subjclass[2010]{14D20, 33B30, 44A35}
%%%%%%%%%%%%%%%%%%%%%%%%%%%%%%%%%
\subjclass[2010]{46L80,19K99}

%%%%%%%%%%%%%%%%%%%%%%%%%%%%%%%%%
% \keywords{***, ****}
%%%%%%%%%%%%%%%%%%%%%%%%%%%%%%%%%

\keywords{flip, $K$-theory}

%%%%%%%%%%%%%%%%%%%%%%%%%%%%%%%%%
\begin{document}

\begin{abstract} We give an algorithm to compute the $K$-groups of the crossed product by the 
flip automorphism for a nuclear C$^*$-algebra satisfying the UCT. 
\end{abstract}

\maketitle

\section{Introduction} One of the most natural $\Z_2$-actions appearing 
in the theory of operator algebras arises from the flip automorphism, 
and it often characterizes important classes of operator algebras 
(see \cite{S75}, \cite{Co76}, \cite{ER78}, \cite{KP00}, \cite{TW07}). 
More precisely, the flip automorphism acting on the tensor square $A^{\otimes 2}=A\otimes A$ of a C$^*$-algebra $A$, 
is an extension of the map $x\otimes y\mapsto y\otimes x$. 

In \cite{I041}, \cite{I042}, the author developed the theory of finite group actions with the Rohlin property, 
and showed that the flip automorphism in the case of the Cuntz algebra $\cO_2$ has the Rohlin property. 
The first step to obtain this result was to compute the $K$-groups of the fixed point algebra, or equivalently, 
the crossed product. 
Since the flip automorphism is available for every C$^*$-algebra, it would be desirable to have a systematic 
algorithm to compute the $K$-groups of the crossed product by the flip automorphism. 
The purpose of this note is to provide such an algorithm in the case of separable nuclear C$^*$-algebras 
satisfying the UCT. 
The author would not be surprised if some of the contents of this note is known to specialists, 
but he believes that they are still worth to be written. 

This note is an extended version of the author's private note written around 2001. 
The author would like to thank Hiroki Matui for useful discussions, and the anonymous referee for careful reading. 
This work is supported in part by JSPS KAKENHI Grant Number JP15H03623. 

%%%%%%%%%%%%%%%%%%%%%%%%%%%%%%%%%%%%%%%%%%%%%%%%%%%%%%%%%%%%%%%%%%%%%%%%%%%%%
%%%%%%%%%%%%%%%%%%%%%%%%%%%%%%%%%%%%%%%%%%%%%%%%%%%%%%%%%%%%%%%%%%%%%%%%%%%%%
%%%%%%%%%%%%%%%%%%%%%%%%%%%%%%%%%%%%%%%%%%%%%%%%%%%%%%%%%%%%%%%%%%%%%%%%%%%%%
\section{$KK^{\Z_2}$-equivalence} 
Let $A$ be a C$^*$-algebra, and let $n$ be a natural number greater than 1. 
Throughout this note, 
we denote by $\theta_A$, or $\theta$ for simplicity, the flip automorphism of $A^{\otimes 2}$. 
We denote by $SA$ the suspension of $A$, which is identified with either $C_0((0,1),A)$ or $C_0(\R,A)$ 
depending on the situation. 
We denote by $\M_n$ and $\K$ the $n$ by $n$ matrix algebra and the set of compact operators on a separable 
infinite dimensional Hilbert space respectively. 
The symbol $\Z_n$ stands for the cyclic group $\Z/n\Z$. 
We often use the fact that the crossed product $(A\oplus A)\rtimes_\gamma \Z_2$ is isomorphic to $\M_2(A)$, 
where $\gamma(x\oplus y)=y\oplus x$.

\begin{theorem}\label{KK} Let $A$ and $B$ be mutually $KK$-equivalent separable nuclear $C^*$-algebras. 
Then $A^{\otimes2}$ and $B^{\otimes2}$ are $KK^{\Z_2}$-equivalent to each other with $\Z_2$-actions given 
by the  flip automorphisms. 
\end{theorem}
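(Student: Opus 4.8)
The plan is to construct an explicit map $KK(A,B)\to KK^{\Z_2}(A^{\otimes2},B^{\otimes2})$ by taking external squares and equipping them with the graded flip, and then to show this map is multiplicative, so that it carries a $KK$-equivalence to a $KK^{\Z_2}$-equivalence.

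First I would represent a class $\alpha\in KK(A,B)$ by a Kasparov module $(E,\phi,F)$ with $F=F^*$ and $\|F\|\le1$, and form its external square, the Kasparov $(A^{\otimes2},B^{\otimes2})$-module $(E\hat\otimes E,\phi\hat\otimes\phi,F\# F)$ representing $\alpha\times\alpha$. On the graded Hilbert module $E\hat\otimes E$ the graded flip $\Sigma(\xi\hat\otimes\eta)=(-1)^{\partial\xi\,\partial\eta}\eta\hat\otimes\xi$ is an even unitary with $\Sigma^2=\id$; a direct computation with the Koszul sign rule shows $\Sigma(\phi\hat\otimes\phi)(a)\Sigma=(\phi\hat\otimes\phi)(\theta(a))$, that $\Sigma(\xi b)=\Sigma(\xi)\theta(b)$, and that $\Sigma$ commutes with the symmetric combination $F\hat\otimes1+1\hat\otimes F$ underlying $F\# F$, since it interchanges the two summands. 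Hence $\Sigma$ makes $(E\hat\otimes E,\phi\hat\otimes\phi,F\# F)$ into a $\Z_2$-equivariant Kasparov module for the flip actions, defining a class $\tilde\alpha\in KK^{\Z_2}(A^{\otimes2},B^{\otimes2})$. Applying the same construction to Kasparov modules with coefficients in $C([0,1])$ shows $\tilde\alpha$ depends only on $\alpha$, so $\alpha\mapsto\tilde\alpha$ is well defined. Nuclearity of $A$ and $B$ guarantees that the spatial and maximal tensor products agree, so these external constructions cause no ambiguity.

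The second, and decisive, step is multiplicativity: for $\alpha\in KK(A,B)$ and $\beta\in KK(B,C)$ I claim $\tilde\alpha\otimes_{B^{\otimes2}}\tilde\beta=\widetilde{\alpha\otimes_B\beta}$ in $KK^{\Z_2}(A^{\otimes2},C^{\otimes2})$. To see this, represent $\beta$ by $(E',\phi',F')$ and use the interchange isomorphism of Hilbert modules $(E\hat\otimes E)\otimes_{B^{\otimes2}}(E'\hat\otimes E')\cong(E\otimes_B E')\hat\otimes(E\otimes_B E')$; one checks this isomorphism intertwines $\Sigma_E\otimes\Sigma_{E'}$ with the graded flip $\Sigma_{E\otimes_B E'}$. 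Let $H$ be any operator making $(E\otimes_B E',\phi,H)$ a representative of the internal product $\alpha\otimes_B\beta$, that is, an $F'$-connection satisfying Kasparov's positivity condition. I would then verify that the squared operator $H\# H$ is an $(F'\# F')$-connection on $(E\otimes_B E')^{\hat\otimes2}$ satisfying the positivity condition for $\tilde\alpha\otimes_{B^{\otimes2}}\tilde\beta$, and that it commutes with $\Sigma_{E\otimes_B E'}$. By uniqueness of the (equivariant) Kasparov product up to operator homotopy, the equivariant module $((E\otimes_B E')^{\hat\otimes2},H\# H,\Sigma_{E\otimes_B E'})$ represents both $\tilde\alpha\otimes_{B^{\otimes2}}\tilde\beta$ and, by construction, $\widetilde{\alpha\otimes_B\beta}$, giving the identity.

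Granting multiplicativity, the theorem follows quickly. Choose $\alpha\in KK(A,B)$ and $\beta\in KK(B,A)$ with $\alpha\otimes_B\beta=1_A$ and $\beta\otimes_A\alpha=1_B$. The external square of the identity module $(A,\id,0)$ with its flip is the equivariant identity, so $\widetilde{1_A}=1_{A^{\otimes2}}$ in $KK^{\Z_2}$, and likewise for $B$. Multiplicativity then gives $\tilde\alpha\otimes_{B^{\otimes2}}\tilde\beta=\widetilde{1_A}=1_{A^{\otimes2}}$ and $\tilde\beta\otimes_{A^{\otimes2}}\tilde\alpha=\widetilde{1_B}=1_{B^{\otimes2}}$, so $\tilde\alpha$ is a $KK^{\Z_2}$-equivalence between $A^{\otimes2}$ and $B^{\otimes2}$ with the flip actions. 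I expect the main obstacle to be the multiplicativity step: reconciling the freedom in the choice of connection $H$ for the internal product with the requirement that its external square be simultaneously a valid product operator and $\Sigma$-equivariant. Making this precise rests on the uniqueness clause of the Kasparov product and a careful bookkeeping of Koszul signs in the interchange isomorphism, which is where I would concentrate the detailed work.
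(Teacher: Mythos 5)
Your strategy---external squares of Kasparov modules carrying the graded flip, multiplicativity via uniqueness of the product, units to units---is a genuinely different route from the paper's. The paper never touches Kasparov cycles: it first replaces $A^{\otimes 2}$ by $(S^2A)^{\otimes 2}$ using equivariant Bott periodicity (the flip on $C_0(\C)^{\otimes 2}=C_0(\C^2)$ comes from a complex linear map of $\C^2$, so \cite[Theorem 20.3.2]{B98} applies), and then uses nuclearity to represent the $KK$-equivalence by \emph{completely positive asymptotic morphisms} $\phi_t\colon S^2A\otimes\K\to S^2B\otimes\K$ and $\psi_t$ in the other direction; the tensor squares $\phi_t\otimes\phi_t$ and $\psi_t\otimes\psi_t$ are then exactly flip-equivariant, so no operator corrections or Koszul bookkeeping ever arise---that is precisely what complete positivity (hence nuclearity) buys. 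Your approach, if completed, would in fact prove more: Kasparov's external product over minimal tensor products requires no nuclearity, so the hypothesis ``nuclear'' could be dropped.

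However, there is a genuine gap at the central step, the construction of $\tilde\alpha$ itself. Your verification that $\Sigma$ intertwines the representations and exchanges $F\hat\otimes 1$ with $1\hat\otimes F$ is correct, but the conclusion that this makes $(E\hat\otimes E,\phi\hat\otimes\phi,F\# F)$ an equivariant Kasparov module does not follow, because the exactly $\Sigma$-invariant operator $F\hat\otimes 1+1\hat\otimes F$ is \emph{not} a Kasparov operator: $(F\hat\otimes 1+1\hat\otimes F)^2-1=F^2\hat\otimes 1+1\hat\otimes F^2-1$, and multiplying by $(\phi\hat\otimes\phi)(a_1\otimes a_2)$ leaves terms such as $\phi(a_1)\hat\otimes\phi(a_2)$ and $k\hat\otimes\phi(a_2)$ with $k\in\K(E)$, none of which are compact on $E\hat\otimes E$. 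This failure is exactly why the external product needs a correction, and the standard corrected operators, e.g.\ $F\hat\otimes 1+\bigl((1-F^2)^{1/2}\hat\otimes 1\bigr)(1\hat\otimes F)$, are visibly \emph{not} $\Sigma$-invariant; nor are they invariant modulo relatively compact operators in any obvious way, since $1-(1-F^2)^{1/2}=F^2\bigl(1+(1-F^2)^{1/2}\bigr)^{-1}$ acts like the identity (not like zero) on $\phi(a)$ modulo compacts, so $\Sigma(F\# F)\Sigma-F\# F$ retains terms like $F\phi(a_1)\hat\otimes\phi(a_2)-\phi(a_1)\hat\otimes F\phi(a_2)$. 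Thus the cycle you display either fails the Fredholm condition or fails equivariance. The repair is available but constitutes the real work of the proof: either (i) use unbounded (Baaj--Julg) representatives, for which the external square $(E\hat\otimes E,\phi\hat\otimes\phi,D\hat\otimes 1+1\hat\otimes D)$ needs no correction and is exactly $\Sigma$-invariant, then take its bounded transform; or (ii) run Kasparov's technical theorem $\Z_2$-equivariantly (averaging over the compact group $\Z_2$) to manufacture a $\Sigma$-invariant valid operator. The same issue recurs for your $H\# H$ in the multiplicativity step, where moreover the uniqueness clause you invoke must be that of the \emph{equivariant} Kasparov product. Finally, well-definedness needs one more line than you give: the external square of a homotopy lives over $C([0,1]^2)\otimes B^{\otimes 2}$, and one must restrict to the diagonal copy of $C([0,1])$ to get an equivariant homotopy.
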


\begin{proof} Let $S^2A=S(SA)$, which is identified with $C_0(\C)\otimes A$. 
We first claim that $(S^2A)^{\otimes2}$ is $KK^{\Z_2}$-equivalent to $A^{\otimes2}$. 
Indeed, we note that the flip $\theta_{S^2A}$ acting on $(S^2A)^{\otimes2}$ is conjugate to 
$\theta_{C_0(\C)}\otimes \theta_A$ acting on $C_0(\C)^{\otimes 2}\otimes A^{\otimes 2}$, 
and that $\theta_{C_0(\C)}$ on $C_0(\C)^{\otimes2}=C_0(\C^2)$ comes from the complex linear map $\left(
\begin{array}{cc}
0 &1  \\
1 &0 
\end{array}
\right)$ of the underlaying space $\C^2$.  
Thus thanks to \cite[Theorem 20.3.2]{B98}, we get the claim. 

Let $\phi_t:S^2A\otimes \K\to S^2B\otimes \K$ and 
$\psi_t:S^2B\otimes \K\to S^2A\otimes \K$ be completely positive asymptotic 
morphisms giving the $KK$-equivalence of $A$ and $B$. 
Then $\phi_t\otimes \phi_t$ and $\psi_t\otimes \psi_t$ gives $KK^{\Z_2}$-equivalence of 
$(S^2A\otimes \K)^{\otimes2}$ with $\theta_{S^2A\otimes \K}$ and $(S^2B\otimes \K)^{\otimes2}$ with 
$\theta_{S^2B\otimes \K}$. 
Therefore we get the statement. 
\end{proof}

Our idea to compute $K_*(\cp A)$ for a given separable nuclear C$^*$-algebra $A$ is as follows. 
We choose an appropriate $B$ that is $KK$-equivalent to $A$, and compute $K_*(\cp A)$ from $K_*(\cp B)$ 
using Theorem \ref{KK}. 
If $B=\bigoplus_{i=1}^nB_i$, we have 
\begin{align*}
\lefteqn{\cp B} \\
 &\cong \bigoplus_{i=1}^n\cp {B_i}\oplus \bigoplus_{1\leq i<j\leq n}(B_i\otimes B_j\oplus B_j\otimes B_i)\rtimes \Z_2\\
 &\cong \bigoplus_{i=1}^n\cp {B_i}\oplus \bigoplus_{1\leq i<j\leq n}\M_2(B_i\otimes B_j),
\end{align*}
and 
$$K_*(\cp A)\cong \bigoplus_{i=1}^nK_*(\cp {B_i})\oplus \bigoplus_{1\leq i<j\leq n}K_*(B_i\otimes B_j).$$
The dual action $\widehat{\theta_B}$ acts on $K_*(\M_2(B_i\otimes B_j))$ trivially. 

Assume moreover that $A$  satisfies the UCT and its $K$-groups are finitely generated. 
Then we can choose each $B_i$ isomorphic to one of the following building blocks: $\C$, the Cuntz algebra $\cO_{n+1}$, $S\C$, and the dimension drop 
algebra 
$$D_n=\{f\in C_0([0,1),\M_n);\; f(0)\in \C1_{\M_n}\}.$$
Note that their $K$-groups are 
$$K_*(\C)\cong\left\{
\begin{array}{ll}
\Z , &\quad *=0 \\
\{0\} , &\quad *=1
\end{array}
\right.,$$
$$K_*(\cO_{n+1})\cong\left\{
\begin{array}{ll}
\Z_n , &\quad *=0 \\
\{0\} , &\quad *=1
\end{array}
\right.,$$
$$K_*(S\C)\cong\left\{
\begin{array}{ll}
\{0\} , &\quad *=0 \\
\Z , &\quad *=1
\end{array}
\right.,$$
$$K_*(D_n)\cong\left\{
\begin{array}{ll}
\{0\} , &\quad *=0 \\
\Z_n , &\quad *=1
\end{array}
\right..$$

%%%%%%%%%%%%%%%%%%%%%%%%%%%%%%%%%%%%%%%%%%%%%%%%%%%%%%%%%%%%%%%%%%%%%%%%%%%%%
%%%%%%%%%%%%%%%%%%%%%%%%%%%%%%%%%%%%%%%%%%%%%%%%%%%%%%%%%%%%%%%%%%%%%%%%%%%%%
%%%%%%%%%%%%%%%%%%%%%%%%%%%%%%%%%%%%%%%%%%%%%%%%%%%%%%%%%%%%%%%%%%%%%%%%%%%%%
\section{Building bocks}
In this section, we compute $K_*(\cp A)$ for each of the building blocks. 

We denote by $\lambda$ the implementing unitary for $\theta$ in the crossed product $\cp A$, 
and by $e_+$ and $e_-$ the spectral projections of $\lambda$ corresponding to the eigenvalues 
$1$ and $-1$ respectively.  
We denote by $\delta$ the exponential map in the 6-term exact sequence of the $K$-groups.

%%%%%%%%%%%%%%%%%%%%%%%%%%%%%%%%%%%%%%%%%%%%%%%%%%%%%%%%%%%%%%%%%%%%%%%%%%%%%
\subsection{$\cp{\C}$} 

It is trivial to see 
$$K_*(\cp \C)=K_*(\C \Z_2)\cong \left\{
\begin{array}{ll}
\Z^2 , &\quad *=0 \\
\{0\} , &\quad *=1
\end{array}
\right.,
$$
and the dual action $\hat{\theta}$ acts on $\Z^2$ as the flip of the two components. 

Let $\cO_\infty$ be the Cuntz algebra, which is the universal C$^*$-algebra generated by isometries $\{S_i\}_{i=1}^\infty$ 
with mutually orthogonal ranges. 
It is well known that $\cO_\infty$ is $KK$-equivalent to $\C$, and Theorem \ref{KK} implies that 
$\cO_\infty^{\otimes 2}$ with the flip automorphism is $KK^{\Z_2}$-equivalent 
to $\C$ with a trivial action, which is further known to be $KK^{\Z_2}$-equivalent to 
$\cO_\infty$ with a quasi-free action (see \cite{Pi97} for example). 
On the other hand, Goldstein and the author \cite{GI11} showed that all non-trivial quasi-free $\Z_2$-actions 
on $\cO_\infty$ are mutually conjugate.  
Note that $\cO_\infty\otimes \cO_\infty$ is isomorphic to $\cO_\infty$ thanks to 
Kirchberg-Phillips \cite{KP00}. 

\begin{problem} Is the flip on $\cO_\infty^{\otimes 2}$ conjugate to a quasi-free $\Z_2$-action on $\cO_\infty$ ? 
\end{problem}

To solve the above problem affirmatively, it suffices to show that the flip on $\cO_\infty^{\otimes 2}$ is 
approximately representable as a $\Z_2$-action (see \cite{GI11}). 
More generally we can pose the following problem. 

\begin{problem} Let $D$ be a strongly self-absorbing C$^*$-algebra with sufficiently many projections (see \cite{TW07}). 
Is the flip on $D^{\otimes 2}$ approximately (asymptotically) representable ?
\end{problem} 

Note that there is a unique $\Z_2$-action on the Cuntz algebra $\cO_2$ with the Rohlin property (see \cite{I041}), 
and it is at the same time asymptotically representable. 
Thus the second problem is affirmative for $\cO_2$. 
Since the flip on $\M_n^{\otimes 2}$ is inner, the flip on any UHF algebra is approximately representable. 

%%%%%%%%%%%%%%%%%%%%%%%%%%%%%%%%%%%%%%%%%%%%%%%%%%%%%%%%%%%%%%%%%%%%%%%%%%%%%
\subsection{$\cp{\cO_{n+1}}$} 

Let $\cO_{n+1}$ be the Cuntz algebra, which is the universal C$^*$-algebra generated by $n+1$ isometries 
$\{S_i\}_{i=0}^{n}$ with mutually orthogonal ranges satisfying $\sum_{i=0}^nS_iS_i^*=1$. 

Our computation in the case of $\cp{\cO_{n+1}}$ and $\cp{D_n}$ is based on the following 
elementary fact. 
\begin{lemma}\label{ED} The elementary divisors of an integer matrix 
$$\left(
\begin{array}{cc}
-n &\frac{n^2-n}{2}  \\
-n &\frac{n^2+n}{2} 
\end{array}
\right)$$
is $(n,n)$ for odd $n$ and $(\frac{n}{2},2n)$ for even $n$. 
\end{lemma}

\begin{theorem}
Let the notation be as above. \\
$(1)$ When $n$ is odd, 
$$K_0(\cp{\cO_{n+1}})
\cong \Z_n\oplus\Z_n,$$
$$K_1(\cp{\cO_{n+1}})
\cong \{0\}.$$
The canonical generators of the right-hand side for  
$K_0(\cp{\cO_{n+1}})$ 
are  given by $[e_+]$, $[e_-]$. \\
$(2)$ When $n$ is even, 
$$K_0(\cp{\cO_{n+1}})
\cong \Z_{2n}\oplus\Z_{n/2},$$
$$K_1(\cp{\cO_{n+1}})
\cong \{0\}.$$
The canonical generators of the right-hand side for  
$K_0(\cp{\cO_{n+1}})$ 
are  given by $[e_+]$, $[e_+]-[e_-]$.
\end{theorem}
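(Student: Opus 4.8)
The plan is to compute $K_*(\cp{\cO_{n+1}})$ via the six-term exact sequence coming from the crossed product, feeding in the known $K$-theory of $\cO_{n+1}^{\otimes 2}$ and of the crossed product, and identifying the relevant boundary maps as the integer matrix appearing in Lemma~\ref{ED}. First I would recall that $\cO_{n+1}^{\otimes 2}\cong \cO_{n+1}$ with $K_0\cong\Z_n$ and $K_1=0$, so $K_*(\cO_{n+1}^{\otimes 2})$ is concentrated in degree $0$. The flip $\theta$ acts on $K_0(\cO_{n+1}^{\otimes2})$; I would use the fact that under the K\"unneth isomorphism $K_0(\cO_{n+1}^{\otimes2})$ is a quotient of $K_0(\cO_{n+1})\otimes K_0(\cO_{n+1})$ (together with a $\mathrm{Tor}$ term in $K_1$, which here collapses), and that the flip acts by swapping the tensor factors. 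The central arithmetic task is to pin down exactly how $\theta_*$ acts on $\Z_n$, since on $\Z\otimes\Z=\Z$ the swap would be trivial, but the relations in $\cO_{n+1}$ twist this; this is where I expect the matrix $\left(\begin{smallmatrix}-n&\frac{n^2-n}{2}\\-n&\frac{n^2+n}{2}\end{smallmatrix}\right)$ to enter, describing the induced map on a free resolution or on the $K$-theory of $\cp{\M_n}$-type pieces.

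Next I would set up the Pimsner--Voiculescu-type (equivalently, the Connes--Thom / mapping-cone) six-term sequence for the $\Z_2$-crossed product. The standard tool here is the exact sequence relating $K_*(\cO_{n+1}^{\otimes2})$, $K_*(\cp{\cO_{n+1}})$, and the $K$-theory of the mapping cone of $1-\widehat\theta$ on the dual side; alternatively one exploits the splitting $\cp A\sim$ built from the eigenprojections $e_\pm$ of $\lambda$. The eigenprojections give a natural pair of elements $[e_+],[e_-]\in K_0(\cp{\cO_{n+1}})$, and the two assertions about canonical generators tell me precisely which basis to track through the exact sequence. I would compute the image of the relevant connecting map $\delta$ (the exponential map named in the excerpt) applied to generators; the claim is that in a suitable basis this map is given by the Lemma~\ref{ED} matrix, so that $\operatorname{coker}$ of $1-\theta_*$ (or the cokernel of the boundary) has the stated elementary divisors.

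With the matrix identified, the computation splits into the two parity cases exactly as in Lemma~\ref{ED}: for odd $n$ the elementary divisors $(n,n)$ give $K_0\cong\Z_n\oplus\Z_n$, and for even $n$ the divisors $(\tfrac n2,2n)$ give $K_0\cong\Z_{2n}\oplus\Z_{n/2}$. The vanishing $K_1=0$ should follow because the domain and codomain of the relevant boundary map are both torsion-free of equal rank (both copies of $\Z^2$), so the map is injective and leaves no $K_1$ contribution; I would verify injectivity from the nonzero determinant of the matrix, namely $\det=-n\cdot\frac{n^2+n}{2}+n\cdot\frac{n^2-n}{2}=-n^2$, which is nonzero. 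Finally I would identify the generators: tracking $[e_+]$ and $[e_-]$ through the isomorphism and diagonalizing the matrix via its Smith normal form. The last step is where the two cases genuinely differ---for odd $n$ the natural generators remain $[e_+],[e_-]$, whereas for even $n$ a change of basis is forced and $[e_+]-[e_-]$ becomes the generator of the $\Z_{n/2}$ summand.

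I expect the main obstacle to be the precise determination of the flip action $\theta_*$ on $K_0(\cO_{n+1}^{\otimes2})\cong\Z_n$ and, correspondingly, the honest identification of the boundary map $\delta$ with the stated integer matrix rather than merely with a matrix having the same elementary divisors. Getting the integral (not just rational or mod-$n$) bookkeeping right---so that $[e_+]$ and $[e_-]$ land on the correct generators and the parity dichotomy emerges naturally rather than by fiat---is the delicate part, and it is exactly the content that Lemma~\ref{ED} is designed to package.
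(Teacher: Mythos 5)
Your proposal is built on two false premises, and fixing either one would force you onto a completely different path. First, $\cO_{n+1}^{\otimes 2}$ is \emph{not} isomorphic to $\cO_{n+1}$, and its $K$-theory is not concentrated in degree $0$: by the K\"unneth theorem, $K_0(\cO_{n+1}^{\otimes 2})\cong \Z_n\otimes\Z_n\cong\Z_n$ but $K_1(\cO_{n+1}^{\otimes 2})\cong \mathrm{Tor}(\Z_n,\Z_n)\cong \Z_n$, which is nonzero for $n\geq 2$; the Tor term is exactly the point, not something that ``collapses.'' Second, the exact sequence you propose to set up does not exist: the Pimsner--Voiculescu sequence is a tool for $\Z$-actions (and Connes--Thom for $\R$-actions), and there is no analogue for $\Z_2$-actions that computes $K_*(A\rtimes_\theta\Z_2)$ from $K_*(A)$ together with $\theta_*$. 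Note also that $\theta$ fixes $1\otimes 1$, whose class generates $K_0(\cO_{n+1}^{\otimes 2})$, so $\theta_*=\id$ on $K_0$; any ``mapping cone of $1-\theta_*$'' recipe therefore has almost nothing to work with, and it cannot produce the delicate $2$-primary dichotomy between $\Z_n\oplus\Z_n$ and $\Z_{2n}\oplus\Z_{n/2}$. This insensitivity of $K_*$-level data is precisely why the paper works with $KK^{\Z_2}$-equivalence (Theorem \ref{KK}) rather than with the induced action on $K$-theory.

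What the paper actually does, and what is entirely absent from your outline, is an equivariant Toeplitz extension argument. One writes $\cO_{n+1}=\cE_{n+1}/\K$ with $\cE_{n+1}$ the Cuntz--Toeplitz algebra, sets $J=\K\otimes\cE_{n+1}+\cE_{n+1}\otimes\K$, and takes crossed products of the two flip-equivariant extensions of $\cE_{n+1}^{\otimes 2}$. One computes $K_0(J\rtimes_\theta\Z_2)\cong\Z^2$, $K_1(J\rtimes_\theta\Z_2)=0$, and $K_0(\cp{\cE_{n+1}})=\langle[e_+],[e_-]\rangle\cong\Z^2$, $K_1(\cp{\cE_{n+1}})=0$ (the latter from Theorem \ref{KK}, since $\cE_{n+1}$ is $KK$-equivalent to $\C$). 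Then $K_0(\cp{\cO_{n+1}})$ and $K_1(\cp{\cO_{n+1}})$ are the cokernel and kernel of the \emph{inclusion-induced} map $K_0(J\rtimes_\theta\Z_2)\to K_0(\cp{\cE_{n+1}})$; this is where the Lemma \ref{ED} matrix lives --- it is not a boundary map and not built from $\theta_*$. The genuinely hard step is the identity $[(p\otimes p)e_+]=\frac{n^2-n}{2}[e_+]+\frac{n^2+n}{2}[e_-]$, where $p$ is the defect projection of $\cE_{n+1}$, which the paper proves by exhibiting explicit partial isometries such as $V_{ij}=\frac{T_i\otimes T_j+T_j\otimes T_i}{\sqrt{2}}e_+ +\frac{T_i\otimes T_j-T_j\otimes T_i}{\sqrt{2}}e_-$ interchanging the relevant projections; the asymmetric coefficients $\frac{n^2\mp n}{2}$ on $[e_+]$ and $[e_-]$, which encode the entire parity dichotomy, come out of this computation. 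Without this (or some substitute for it), Smith-normal-form bookkeeping has no matrix to act on.
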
 

\begin{proof} Let $\cE_{n+1}$ be the Cuntz-Toeplitz algebra, which is the universal C$^*$-algebra  
generated by $n+1$ isometries $\{T_i\}_{i=0}^n$ with mutually orthogonal ranges. 
Let $p\in \cE_{n+1}$ be the projection defined by  
$$p=1-\sum_{i=0}^n T_iT_i^*.$$
Then the closed ideal of $\cE_{n+1}$ generated by $p$ is identified with $\K$, and $\cO_{n+1}$ is identified with 
the quotient $\cE_{n+1}/\K$. 

We define a closed ideal $J$ of 
$\cE_{n+1}\otimes \cE_{n+1}$ by 
$$J=\K\otimes \cE_{n+1}+\cE_{n+1}\otimes \K.$$
Then we have two short exact sequences  
$$\ex{J}{\cE_{n+1}\otimes \cE_{n+1}}
{\cO_{n+1}\otimes \cO_{n+1}},$$
$$\ex{\K\otimes \K}{J}
{(\K\otimes \cO_{n+1})\oplus (\cO_{n+1}\otimes \K)},$$
and two short exact sequences
\begin{equation}\label{C1}
\ex{J\rtimes_\theta\Z_2}
{\cp{\cE_{n+1}}}
{\cp{\cO_{n+1}}},
\end{equation}
\begin{equation}\label{C2}
\ex{\cp{\K}}{J\rtimes_\theta\Z_2}
{\M_2(\K\otimes \cO_{n+1})}.
\end{equation}
Using (\ref{C2}), we get the 6-term exact sequence
$$\6t{\Z^2}{K_0(J\rtimes_\theta \Z_2)}{\Z_n}
{0}{K_1(J\rtimes_\theta \Z_2)}{0}\;,$$
which implies $K_1(J\rtimes_\theta \Z_2)=\{0\}$. 
The generators of $\Z^2$ and $\Z_n$ above are 
$[(p\otimes p)e_+]$, $[(p\otimes p)e_-]$ and 
$[p\otimes 1]$ respectively. 
In $K_0(J\rtimes_\theta \Z_2)$, we have 
$$[(p\otimes p)e_+]+[(p\otimes p)e_-]=[p\otimes p]
=-n[p\otimes 1],$$
which shows that $K_0(J\rtimes \Z_2)$ is isomorphic to $\Z^2$ with 
generators $[p\otimes 1]$ and $[(p\otimes p)e_+]$. 

(\ref{C1}) implies the 6-term exact sequence 
$$\6t{\Z^2}{\Z^2}
{K_0(\cp{\cO_{n+1}})}
{0}{0}{K_1(\cp{\cO_{n+1}})},$$
where we use the facts
$$K_0(\cp{\cE_{n+1}})=\langle [e_+],[e_-]\rangle\cong \Z^2,$$
$$K_1(\cp{\cE_{n+1}})\cong \{0\},$$
which follows from Theorem \ref{KK} as $\cE_{n+1}$ is $KK$-equivalent to $\C$ (see \cite{Pi97} for example). 

Now we compute the image of the generators of $K_0(J\rtimes_\theta \Z_2)$ 
in $K_0(\cp{\cE_{n+1}})$. 
It is immediate to get 
$$[p\otimes 1]=-n[1]=-n([e_+]+[e_-]).$$
To finish the proof, it suffices to show
$$[(p\otimes p)e_+]=\frac{n^2-n}{2}[e_+]+\frac{n^2+n}{2}[e_-],$$
as it implies 
$$K_0(\cp{\cO_{n+1}})\cong \mathrm{coker}(\left(
\begin{array}{cc}
-n &\frac{n^2-n}{2}  \\
-n &\frac{n^2+n}{2} 
\end{array}
\right):\Z^2\rightarrow \Z^2),$$
$$K_1(\cp{\cO_{n+1}})\cong \ker(\left(
\begin{array}{cc}
-n &\frac{n^2-n}{2}  \\
-n &\frac{n^2+n}{2} 
\end{array}
\right):\Z^2\rightarrow \Z^2),$$
and the theorem follows from Lemma \ref{ED}.

We have 
\begin{align*}
\lefteqn{(p\otimes p)e_+} \\
 &=e_+-\sum_{i=0}^n (1\otimes T_iT_i^*+T_iT_i^*\otimes 1
-T_iT_i^*\otimes T_iT_i^*)e_+\\
&+\sum_{i<j}(T_iT_i^*\otimes T_jT_j^*+T_jT_j^*\otimes T_iT_i^*)
e_+\\
&=e_+-\sum_{i=0}^n (T_iT_i^*\otimes T_iT_i^*)e_+\\
&-\sum_{i=0}^n \bigl((1-T_iT_i^*)\otimes T_iT_i^*
+T_iT_i^*\otimes (1-T_iT_i^*)\bigr)e_+\\
&+\sum_{i<j}(T_iT_i^*\otimes T_jT_j^*+T_jT_j^*\otimes T_iT_i^*)
e_+.
\end{align*}
For $0\leq i\leq n$, we set  
$$P_i=\bigl((1-T_iT_i^*)\otimes T_iT_i^*
+T_iT_i^*\otimes (1-T_iT_i^*)\bigr)e_+,$$
which are mutually commuting projections. 
For distinct $i,j,k$, we have $P_iP_jP_k=0$, and 
$$P_iP_j=(T_iT_i^*\otimes T_jT_j^*+T_jT_j^*\otimes T_iT_i^*)
e_+.$$
Thus \begin{align*}
\lefteqn{\sum_{i=0}^n \bigl((1-T_iT_i^*)\otimes T_iT_i^*
+T_iT_i^*\otimes (1-T_iT_i^*)\bigr)e_+} \\
 &-\sum_{i<j}(T_iT_i^*\otimes T_jT_j^*+T_jT_j^*\otimes T_iT_i^*)e_+\\
 &=\sum_{i=0}^{n-1}(P_i-\sum_{j=i+1}^nP_iP_j)+P_n
\end{align*}
is a projection orthogonal to $(T_iT_i^*\otimes T_iT_i^*)e_+$, 
and 
\begin{align*}
\lefteqn{[(p\otimes p)e_+]} \\
 &=[e_+]-\sum_{i=0}^n [(T_iT_i^*\otimes T_iT_i^*)e_+]-\sum_{i=0}^{n-1}[P_i-\sum_{j=i+1}^nP_iP_j]-[P_n] \\
 &=-n[e_+] -\sum_{i=0}^n [P_i]+\sum_{i<j}[P_iP_j].
\end{align*}

We set 
\begin{align*}
\lefteqn{
V_i=\frac{(1-T_iT_i^*)\otimes T_i+T_i\otimes(1-T_iT_i^*)}{\sqrt{2}}e_+}\\ 
&+\frac{(1-T_iT_i^*)\otimes T_i-T_i\otimes(1-T_iT_i^*)}{\sqrt{2}}e_-.
\end{align*}
Then $V_iV_i^*=P_i$
and 
\begin{align*}
V_i^*V_i&=\frac{(1-T_iT_i^*)\otimes 1+1\otimes(1-T_iT_i^*)}{2}e_+\\
 &+\frac{(1-T_iT_i^*)\otimes 1+1\otimes(1-T_iT_i^*)}{2}e_-\\
 &+\frac{(1-T_iT_i^*)\otimes 1-1\otimes(1-T_iT_i^*)}{2}e_+\\
 &+\frac{(1-T_iT_i^*)\otimes 1-1\otimes(1-T_iT_i^*)}{2}e_-\\
 &=(1-T_iT_i^*)\otimes 1.
\end{align*}
Thus we get $[P_i]=0$. 

For $i\neq j$ we define 
$$V_{ij}=\frac{T_i\otimes T_j+T_j\otimes T_i}{\sqrt{2}}e_+
+\frac{T_i\otimes T_j-T_j\otimes T_i}{\sqrt{2}}e_-.$$
Then direct computation yields
$V_{ij}V_{ij}^*=P_iP_j$ and $V_{ij}^*V_{ij}=1$, 
which shows 
$$[P_iP_j]=[1].$$
Thus we get 
$$[(p\otimes p)e_+]=-n[e_+]+\frac{n(n+1)}{2}[1]
=\frac{n^2-n}{2}[e_+]+\frac{n^2+n}{2}[e_-],$$
which finishes the proof. 
\end{proof}

%%%%%%%%%%%%%%%%%%%%%%%%%%%%%%%%%%%%%%%%%%%%%%%%%%%%%%%%%%%%%%%%%%%%%%%%%%%%%
\subsection{$\cp {(S\C)}$}

\begin{theorem}\label{SC} With the above notation, we have 
$$K_0(\cp{(S\C)})=\{0\},$$
$$K_1(\cp{(S\C)})=\Z.$$
The flip $\theta$ acts on $K_0((S\C)^{\otimes 2})\cong \Z$ by $-1$ and the dual action $\hat{\theta}$ acts on 
$K_1(\cp{(S\C)})\cong \Z$ by $-1$. 
\end{theorem}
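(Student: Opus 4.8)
The plan is to identify $(S\C)^{\otimes2}$ with $C_0(\R^2)$, on which $\theta$ is the coordinate flip $(x,y)\mapsto(y,x)$, and to peel off one suspension coordinate. In the rotated coordinates $u=x+y$, $v=x-y$ the flip becomes $\id\otimes\sigma$, where $\sigma(v)=-v$ acts on the second $\R$-factor only; hence
$$\cp{(S\C)}\cong S\C\otimes\bigl(C_0(\R)\rtimes_\sigma\Z_2\bigr).$$
Writing $B=C_0(\R)\rtimes_\sigma\Z_2$, the suspension isomorphism gives $K_0(\cp{(S\C)})\cong K_1(B)$ and $K_1(\cp{(S\C)})\cong K_0(B)$, so everything reduces to computing $K_*(B)$ for the reflection on the line.

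For $B$ I would use the $\sigma$-equivariant extension $\ex{C_0(\R\setminus\{0\})}{C_0(\R)}{\C}$ given by evaluation at the fixed point. Since $\sigma$ interchanges the two half-lines freely and fixes the quotient $\C$, applying the crossed product (and the identity $(A\oplus A)\rtimes_\gamma\Z_2\cong\M_2(A)$ with $A=S\C$) yields
$$\ex{\M_2(S\C)}{B}{\C\Z_2}.$$
Because $K_1(\C\Z_2)=0$ and $K_0(\M_2(S\C))=0$, the six-term sequence collapses to
$$0\longrightarrow K_0(B)\longrightarrow \Z^2\stackrel{\delta}{\longrightarrow}\Z\longrightarrow K_1(B)\longrightarrow 0,$$
with $K_0(\C\Z_2)=\Z^2$ generated by $[e_+],[e_-]$ and $K_1(\M_2(S\C))=\Z$; thus $K_0(B)=\ker\delta$ and $K_1(B)=\mathrm{coker}\,\delta$.

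The crux, and the step I expect to be the main obstacle, is computing the exponential map $\delta$ on $[e_+]$ and $[e_-]$. In the matrix presentation $B\cong\{f\in C_0([0,\infty),\M_2):f(0)\text{ diagonal}\}$ obtained by restricting to orbits, I would lift the two minimal projections to $\mathrm{diag}(h,0)$ and $\mathrm{diag}(0,h)$ with $h(0)=1$ and $h\to0$ at infinity, and compute $\exp(2\pi i\,\mathrm{diag}(h,0))=\mathrm{diag}(e^{2\pi i h},1)$. This is a loop of winding number $\pm1$, hence a generator of $K_1(\M_2(S\C))\cong\Z$, and conjugation by the swap unitary shows $\delta[e_+]=\delta[e_-]$. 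Consequently $\delta$ is surjective with kernel $\Z\langle[e_+]-[e_-]\rangle$, so $K_1(B)=0$ and $K_0(B)\cong\Z$; suspending gives $K_0(\cp{(S\C)})=0$ and $K_1(\cp{(S\C)})\cong\Z$. Honest bookkeeping of the orientation and sign conventions in $\delta$ is the delicate point, but the outcome---surjectivity with a rank-one kernel---is forced regardless of the relative signs of $\delta[e_\pm]$.

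Finally I would read off the two module structures. The flip $(x,y)\mapsto(y,x)$ of $\R^2$ is an orientation-reversing linear isomorphism, so it acts on $K_0(C_0(\R^2))\cong\tilde K^0(S^2)\cong\Z$ by its degree $-1$; equivalently the Künneth generator is $g\otimes g$ with $g$ of odd degree, which the graded flip sends to $-g\otimes g$. Under the isomorphism above the dual action $\hat\theta$ corresponds to $\id\otimes\hat\sigma$, and $\hat\sigma$ interchanges $e_+$ and $e_-$; since $K_1(\cp{(S\C)})\cong K_0(B)$ is generated by $[e_+]-[e_-]$, the dual action sends this generator to its negative, i.e. acts by $-1$.
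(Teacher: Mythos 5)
Your proposal is correct and follows essentially the same route as the paper: the same rotation trick giving $\cp{(S\C)}\cong S\bigl(C_0(\R)\rtimes\Z_2\bigr)$ for the reflection action, the same extension over the fixed point $0$ with ideal $\M_2(S\C)$ and quotient $\C\Z_2$, and the same conclusion that $\delta[e_+]=\delta[e_-]$ is a generator of $K_1(\M_2(S\C))$, forcing $\ker\delta=\Z([e_+]-[e_-])$ and $\mathrm{coker}\,\delta=0$. Your computation of $\delta$ in the matrix-function picture $\{f\in C_0([0,\infty),\M_2):f(0)\text{ diagonal}\}$ is just a change of notation from the paper's lift $fe_+$ with $f$ even, $f(0)=1$; the only genuine addition is that you spell out the degree/Künneth argument for $\theta$ acting by $-1$ on $K_0((S\C)^{\otimes 2})$, which the paper leaves implicit.
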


\begin{proof}
It suffices to work on $C_0(\R)\rtimes_\alpha\Z_2$, 
where the action $\alpha$ is given by $\alpha(f)(t)=f(-t)$,
because we have 
$$\cp{(S\C)}\cong S(C_0(\R)\rtimes_\alpha\Z_2).$$
Considering the evaluation map at 0, we get the short exact sequence 
$$\ex{C_0((0,\infty))\oplus C_0((-\infty,0))}{C_0(\R)}{\C}.$$
and
$$0\longrightarrow \M_2(C_0((0,\infty)))\longrightarrow 
C_0(\R)\rtimes_\alpha \Z_2\longrightarrow 
\C\Z_2\longrightarrow 0,$$
which implies the 6-term exact sequence
$$\6t
{0}{K_0(C_0(\R)\rtimes_\alpha \Z_2)}{\Z^2}
{\Z}{K_1(C_0(\R)\rtimes_\alpha \Z_2)}{0}
\quad.$$
Let $f\in C_0(\R)$ be a real function satisfying 
$f(0)=1$, $f(x)=f(-x)$. 
Then, we have 
$$\delta([e_+])=[e^{2\pi i f}e_++e_-]=[e^{2\pi i f}e_-+e_+]
=\delta([e_-]),$$
which is the generator of $K_1(\M_2(C_0(\R_{>0})))$.
Thus 
$$K_0(C_0(\R)\rtimes_\alpha \Z_2)\cong\Z,$$
$$K_1(C_0(\R)\rtimes_\alpha \Z_2)\cong\{0\}.$$
The generator of $K_0(C_0(\R)\rtimes_\alpha \Z_2)$ is the 
preimage of $[e_+]-[e_-]$, on which the dual action $\hat{\theta}$ acts by $-1$.   
\end{proof}

\begin{remark}
The generator of $K_0(C_0(\R)\rtimes_\alpha \Z_2)$ is the preimage of $[e_+]-[e_-]=2[e_+]-[1]$, 
and it can be written down explicitly as follows. 
Let $h$ be continuous monotone increasing real function on $\R$ 
satisfying $h(-x)=-h(x)$ and 
$$\lim_{t\to\pm\infty}h(t)=\pm\frac{\pi}{2}.$$
We set $c(t)=\cos h(t)$, $s(t)=\sin h(t)$. 
Then, the generator is 
$$[
\left(\begin{array}{cc}
\frac{1+sc+c^2\lambda}{2}&\frac{s^2-sc\lambda}{2}\\
\frac{s^2-\lambda sc}{2} &\frac{1-sc+c^2\lambda}{2}
\end{array}\right)]-[1].$$
\end{remark}

For later use, we give a more detailed statement actually proved in the proof of Theorem \ref{SC}. 

\begin{lemma} \label{restriction} Let 
$$R:\cp{(S\C)}\to (S\C)\rtimes_{\id} \Z_2=S(\C\Z_2)$$ be the homomorphism arising from the restriction map to the diagonal set, 
and let $\eta$ be the natural isomorphism from $K_1(S(\C\Z_2))$ onto $K_0(\C\Z_2)\cong \Z^2$. 
Then the map $\eta\circ R_*:K_1(\cp{(S\C)})\to K_0(\C\Z_2)$ is injective and 
its image is $\Z([e_+]-[e_-])$. 
\end{lemma}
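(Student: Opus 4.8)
The plan is to recognize the restriction map $R$ as the suspension of the very quotient map whose $K$-theory was already computed in the proof of Theorem \ref{SC}, and then to transport that information through the suspension isomorphism $\eta$.

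First I would make the geometry explicit. Identifying $(S\C)^{\otimes2}$ with $C_0(\R^2)$, on which $\theta$ acts by $f(x,y)\mapsto f(y,x)$, I change coordinates to $u=(x+y)/\sqrt2$ and $v=(x-y)/\sqrt2$, so that $\theta$ becomes $\id\otimes\alpha$ on $C_0(\R_u)\otimes C_0(\R_v)$ with $\alpha(g)(v)=g(-v)$. The diagonal is $\{v=0\}$, hence restriction to the diagonal is $\id\otimes\mathrm{ev}_0$, and passing to crossed products gives
$$R=\id_{C_0(\R_u)}\otimes q:\;C_0(\R_u)\otimes\bigl(C_0(\R_v)\rtimes_\alpha\Z_2\bigr)\longrightarrow C_0(\R_u)\otimes\C\Z_2,$$
where $q:C_0(\R)\rtimes_\alpha\Z_2\to\C\Z_2$ is evaluation at the fixed point $0$. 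Under the identifications $\cp{(S\C)}\cong S(C_0(\R)\rtimes_\alpha\Z_2)$ and $(S\C)\rtimes_{\id}\Z_2\cong S(\C\Z_2)$, this exhibits $R$ as the suspension $S(q)$.

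Next I would invoke naturality of the suspension isomorphism: for $\eta_B:K_1(SB)\xrightarrow{\sim}K_0(B)$ one has $\eta_{\C\Z_2}\circ(Sq)_*=q_*\circ\eta_{C_0(\R)\rtimes_\alpha\Z_2}$ on $K_1$. Since $\eta_{C_0(\R)\rtimes_\alpha\Z_2}$ is an isomorphism, the composite $\eta\circ R_*$ has exactly the same kernel and image as $q_*:K_0(C_0(\R)\rtimes_\alpha\Z_2)\to K_0(\C\Z_2)\cong\Z^2$. Thus the lemma reduces to assertions about $q_*$, and these were established inside the proof of Theorem \ref{SC}: the six-term sequence attached to $0\to\M_2(C_0((0,\infty)))\to C_0(\R)\rtimes_\alpha\Z_2\to\C\Z_2\to0$ starts with $K_0(\M_2(C_0((0,\infty))))=0$, forcing $q_*$ to be injective, while $\delta([e_+])=\delta([e_-])$ generate $K_1(\M_2(C_0((0,\infty))))\cong\Z$, so $\ker\delta=\Z([e_+]-[e_-])$ and hence $\mathrm{im}\,q_*=\ker\delta=\Z([e_+]-[e_-])$ by exactness.

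The main thing to pin down is the first paragraph: I must check that the restriction-to-the-diagonal homomorphism genuinely agrees, after the coordinate change and the two standard suspension identifications of the crossed products, with $S(q)$, and that the chosen $\eta$ on the target is compatible with that identification. Once this bookkeeping is in place, the conclusion is a formal consequence of naturality together with the exact-sequence computation already carried out for Theorem \ref{SC}, with no new $K$-theoretic input required. A harmless rescaling $u=\sqrt2\,t$ of the diagonal parameter may introduce a sign, but this affects neither injectivity nor the image, which is a subgroup.
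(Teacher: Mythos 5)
Your proposal is correct and is essentially the paper's own argument: the paper states that this lemma is "actually proved in the proof of Theorem \ref{SC}," and that proof uses exactly your coordinate rotation to identify $\cp{(S\C)}$ with $S(C_0(\R)\rtimes_\alpha\Z_2)$, the evaluation-at-zero extension $0\to\M_2(C_0((0,\infty)))\to C_0(\R)\rtimes_\alpha\Z_2\to\C\Z_2\to 0$, and the computation $\delta([e_+])=\delta([e_-])=$ generator, which yields injectivity of $q_*$ and $\mathrm{im}\,q_*=\ker\delta=\Z([e_+]-[e_-])$. Your explicit verification that $R$ is the suspension $S(q)$ and the appeal to naturality of the suspension isomorphism is precisely the bookkeeping the paper leaves implicit, so no new input is needed and the argument is sound.
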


There exists a unique separable unital simple purely infinite C$^*$-algebra $KK$-equivalent to $S\C$, 
which is denoted by $\cP_\infty$. 
Its tensor square $\cP_\infty\otimes \cP_\infty$ is Morita equivalent to $\cO_\infty$, and 
$\theta$ acts on $K_0(\cP_\infty^{\otimes 2})\cong \Z$ non-trivially. 
Theorem \ref{KK} and Theorem \ref{SC} imply that $\cp{\cP_\infty}$ is isomorphic to $\cP_\infty$, 
and the dual action $\hat{\theta}$ acts on $K_1(\cp{\cP_\infty})$ non-trivially. 

\begin{problem} Let $A$ be a unital C$^*$-algebra Morita equivalent to $\cO_\infty$ in the Cuntz standard form, 
that is, the $K_0$-class of the identity is 0. 
Let $\alpha$ be a $\Z_2$-action on $A$ acting on $K_0(A)$ non-trivially. 
Can we say something about the isomorphism class of $A\rtimes_\alpha \Z_2$ ? 
Is $A\rtimes_\alpha \Z_2$ isomorphic to $\cP_\infty$ ?
\end{problem}

%%%%%%%%%%%%%%%%%%%%%%%%%%%%%%%%%%%%%%%%%%%%%%%%%%%%%%%%%%%%%%%%%%%%%%%%%%%%%
%%%%%%%%%%%%%%%%%%%%%%%%%%%%%%%%%%%%%%%%%%%%%%%%%%%%%%%%%%%%%%%%%%%%%%%%%%%%%
\subsection{$\cp{D_n}$}
Let $D_n$ be the dimension drop algebra, that is, 
$$D_n=\{f\in C_0([0,1),\M_n);\; f(0)\in \C1\}.$$
We identify $S\M_n$ with $\{f\in D_n;\; f(0)=0\}$. 
We denote by $\{e_{ij}\}_{1\leq i,j\leq n}$ the canonical system of matrix units of $\M_n$. 
There exists a short exact sequence
$$\ex{S\M_n}{D_n}{\C},$$
which implies 
$$K_0(D_n)\cong\{0\},$$
$$K_1(D_n)=\langle u \rangle\cong \Z_n,$$ 
where $u(x)=e^{2\pi ix}e_{11}+1-e_{11}.$ 

\begin{theorem} Let the notations be as above. \\
$(1)$ When $n$ is odd, 
$$K_0(\cp{D_n})\cong \{0\},$$
$$K_1(\cp{D_n})\cong \Z_n\oplus \Z_n.$$
$(2)$ When $n$ is even, 
$$K_0(\cp{D_n})\cong \{0\},$$
$$K_1(\cp{D_n})\cong \Z_{2n}\oplus \Z_{n/2}.$$
\end{theorem}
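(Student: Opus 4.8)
The plan is to mirror the structure of the $\cp{\cO_{n+1}}$ computation, replacing the Cuntz--Toeplitz algebra by its suspension-theoretic analogue. Since $D_n$ fits into the short exact sequence $\ex{S\M_n}{D_n}{\C}$, I would look for a ``Toeplitz-type'' extension $\ex{\K}{\cT_n}{D_n}$ whose flip-crossed product $K$-theory is as easy as that of $\cE_{n+1}$, so that $D_n$ plays the role that $\cO_{n+1}$ played before. More precisely, I would build an algebra $\cT_n$ that is $KK$-equivalent to $\C$ and whose quotient by $\K$ is $D_n$; then the tensor-square extensions and their flip-crossed products should produce exactly the same integer matrix as in Lemma~\ref{ED}. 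The guiding heuristic is that the two building blocks $\cO_{n+1}$ and $D_n$ differ only by a dimension shift in $K$-theory (an extra suspension), which merely interchanges the roles of $K_0$ and $K_1$ in the final answer while leaving the underlying $2\times 2$ matrix $\left(\begin{smallmatrix}-n&\frac{n^2-n}{2}\\-n&\frac{n^2+n}{2}\end{smallmatrix}\right)$ unchanged. This is why the cokernel data $(\Z_n\oplus\Z_n$ for odd $n$, $\Z_{2n}\oplus\Z_{n/2}$ for even $n)$ reappears, but now attached to $K_1$ rather than $K_0$, with $K_0(\cp{D_n})=\{0\}$ coming from the vanishing kernel established via Lemma~\ref{ED}.

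Concretely, I would first set up the two short exact sequences at the algebra level for the tensor square: an outer sequence relating the ideal $J=\K\otimes \cT_n + \cT_n\otimes \K$ to $\cT_n^{\otimes 2}$ with quotient $D_n\otimes D_n$, and an inner sequence $\ex{\K^{\otimes 2}}{J}{(\K\otimes D_n)\oplus(D_n\otimes \K)}$. Applying $\rtimes_\theta\Z_2$ to each gives the analogues of (\ref{C1}) and (\ref{C2}), and I would use the identification $\cp{\K}\cong \M_2(\K)$ together with $(\K\otimes D_n\oplus D_n\otimes\K)\rtimes\Z_2\cong \M_2(\K\otimes D_n)$. The dual action acts trivially on the $\M_2$-summand's $K$-theory, exactly as noted after Theorem~\ref{KK}. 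Running the two $6$-term exact sequences should first compute $K_*(J\rtimes_\theta\Z_2)$ and then $K_*(\cp{D_n})$, with $K_0(\cp{\cT_n})\cong\Z^2$ generated by $[e_+],[e_-]$ and $K_1(\cp{\cT_n})=\{0\}$ inherited from the $KK$-equivalence of $\cT_n$ with $\C$ via Theorem~\ref{KK}.

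The computational heart, exactly as in the $\cO_{n+1}$ case, will be to express the image of the generator $[(p\otimes p)e_+]$ (or its suspended analogue) in $K_*(\cp{\cT_n})$ as an explicit integer combination of $[e_+]$ and $[e_-]$. I would construct partial isometries $V_i$ and $V_{ij}$, built from the generating isometries twisted by $e_+,e_-$ as in the earlier proof, to show that the ``diagonal'' projections $P_i$ are trivial in $K$-theory while the ``off-diagonal'' products $P_iP_j$ each contribute $[1]=[e_+]+[e_-]$. The expected outcome is the same formula $\frac{n^2-n}{2}[e_+]+\frac{n^2+n}{2}[e_-]$, so that the connecting homomorphism is again governed by the matrix in Lemma~\ref{ED}, and the theorem follows by reading off its cokernel (now in degree $1$) and noting the kernel vanishes.

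The main obstacle I anticipate is the first step: identifying the correct ``Cuntz--Toeplitz replacement'' $\cT_n$ for $D_n$ and verifying that its flip-crossed product behaves as cleanly as $\cp{\cE_{n+1}}$. Unlike $\cO_{n+1}$, the dimension drop algebra is \emph{not} purely infinite and carries its $K$-theory in degree $1$, so the suspension bookkeeping — keeping track of which connecting maps are exponential versus index, and ensuring no spurious $K_0$-classes survive — is where errors are most likely to creep in. Once the matrix is confirmed to be identical to the one in Lemma~\ref{ED}, the rest is a faithful transcription of the $\cO_{n+1}$ argument with $K_0$ and $K_1$ interchanged.
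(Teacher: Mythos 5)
There is a genuine gap, and it sits at the very first step on which everything else in your plan depends: the Toeplitz-type algebra $\cT_n$ you want --- an extension $\ex{\K}{\cT_n}{D_n}$ with $\cT_n$ $KK$-equivalent to $\C$ --- does not exist. For \emph{any} extension of $D_n$ by $\K$, the index map $K_1(D_n)\cong\Z_n\to K_0(\K)\cong\Z$ vanishes, being a homomorphism from a torsion group into a torsion-free group; the six-term exact sequence then forces $K_1(\cT_n)\cong\Z_n\neq\{0\}$, so $\cT_n$ can never be $KK$-equivalent to $\C$. The mechanism that makes $\cE_{n+1}$ work for $\cO_{n+1}$ --- the torsion of $K_0(\cO_{n+1})$ is \emph{created} as the cokernel of the degree-zero map $K_0(\K)\to K_0(\cE_{n+1})$, which is multiplication by $-n$ --- has no analogue one degree up, because torsion in $K_1$ of the quotient cannot be absorbed by mapping into $K_0(\K)=\Z$. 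Your guiding heuristic is also false: flip crossed products do not commute with suspension. Indeed $K_0(\cp{\C})\cong\Z^2$ and $K_1(\cp{\C})=\{0\}$, whereas Theorem \ref{SC} gives $K_0(\cp{(S\C)})=\{0\}$ and $K_1(\cp{(S\C)})\cong\Z$, \emph{not} $\Z^2$; the reason is precisely that the flip acts by $-1$ on $K_0(C_0(\R^2))$, i.e.\ the flip sees the suspension variables. So the degree-shifted answer cannot be predicted formally; it must come out of a new computation. The later steps inherit the problem as well: your $V_i$, $V_{ij}$ are built from isometries with mutually orthogonal ranges, and no algebra in this setting (an extension of a subhomogeneous algebra by $\K$) contains such isometries.

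For contrast, the paper's route is structurally reversed. Instead of resolving $D_n$ from outside by a $KK$-trivial algebra, it uses the ideal structure \emph{inside} $D_n^{\otimes 2}$: with $J=S\M_n\otimes D_n+D_n\otimes S\M_n=\{f;\;f(0,0)=0\}$ one gets $\ex{J\rtimes_\theta\Z_2}{\cp{D_n}}{\C\Z_2}$ and $\ex{\cp{(S\M_n)}}{J\rtimes_\theta\Z_2}{S\M_{2n}}$, so the unknown algebra is now the \emph{middle} term and the known algebra $\C\Z_2$ is the quotient --- the opposite of the Cuntz--Toeplitz configuration. Then $K_0(J\rtimes_\theta\Z_2)=\{0\}$, $K_1(J\rtimes_\theta\Z_2)\cong\Z^2$ are computed from the $S\C$ building block (Theorem \ref{SC}), generators $[w]$ and $\iota_*(g)$ are pinned down by an explicit $\theta$-invariant unitary and Lemma \ref{restriction}, and the exponential map $\delta:K_0(\C\Z_2)\to K_1(J\rtimes_\theta\Z_2)$ is evaluated by pushing everything into $S(\cp{\M_n})$ via the restriction-to-the-diagonal homomorphism, using that the flip on $\M_n^{\otimes 2}$ is inner. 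Only at that point does the matrix of Lemma \ref{ED} reappear; your prediction of its role (and of the vanishing kernel giving $K_0=\{0\}$) is correct, but reaching it requires these degree-one, unitary-level computations rather than a transcription of the $\cO_{n+1}$ argument.
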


\begin{proof} We realize $D_n\otimes D_n$ inside $C([0,1]^2,\M_n\otimes \M_n)$, and define 
a closed ideal $J$ of $D_n\otimes D_n$ by 
$$J=S\M_n\otimes D_n+D_n\otimes S\M_n
=\{f\in D_n\otimes D_n;\; f(0,0)=0\}.$$
Then we have two short exact sequences 
$$\ex{J}{D_n\otimes D_n}{\C},$$
$$\ex{S\M_n\otimes S\M_n}{J}{S\M_n\otimes \C\oplus\C\otimes S\M_n},$$
which imply the two short exact sequences
\begin{equation}\label{D1}
\ex{J\rtimes_\theta \Z_2}{\cp{D_n}}{\C\Z_2},
\end{equation}
\begin{equation}\label{D2}
\ex{\cp{(S\M_n)}}{J\rtimes_\theta \Z_2}
{S\M_{2n}}.
\end{equation}
From (\ref{D2}) and Theorem \ref{SC}, we get the 6-term exact sequence 
\begin{equation}\label{D3}
\6t{0}{K_0(J\rtimes_\theta \Z_2)}{0}
{\Z}{K_1(J\rtimes_\theta \Z_2)}{\Z}\quad ,
\end{equation}
and 
$$K_0(J\rtimes_\theta \Z_2)\cong \{0\},$$
$$K_1(J\rtimes_\theta \Z_2)\cong \Z^2.$$

We determine the generators of the latter now. 
Let $\iota$  be the inclusion map $\iota:\cp{(S\M_n)}\to J\rtimes_\theta \Z_2$ and let $\pi$ be quotient map 
$$\pi: J\rtimes_\theta \Z_2\to (S\M_n\otimes \C\oplus\C\otimes S\M_n)\rtimes_\theta \Z_2\cong \M_2(S\M_n)
\cong S\M_{2n}.$$ 
Note that $\pi$ restricted to $J$ is given by the direct sum of the restriction map 
to $y=0$ and that of $x=0$. 
We set $w=Fe_++e_-$ with $F\in J+1_n$ as follows: 
$F$ is a unitary that is invariant under $\theta$, and for $0\leq y\leq x\leq 1$, 
$(x,y)\neq (0,0)$, it is given by 
\begin{align*}
\lefteqn{F(x,y)} \\
 &=z(x)e_{11}\otimes e_{11}+(1-e_{11})\otimes (1-e_{11}) \\
 &+(z(x)-1)\sum_{j=2}^n 
(c(x,y)^2e_{11}\otimes e_{jj}+s(x,y)^2e_{jj}\otimes e_{11}\\
&+c(x,y)s(x,y)e_{1j}\otimes e_{j1}+c(x,y)s(x,y)e_{j1}\otimes e_{1j}) \\
 &+\sum_{j=2}^n 
(e_{11}\otimes e_{jj}+e_{jj}\otimes e_{11}) \\
 &=z(x)e_{11}\otimes e_{11}+(1-e_{11})\otimes (1-e_{11})\\
&+z(x)\sum_{j=2}^n 
(c(x,y)^2e_{11}\otimes e_{jj}+s(x,y)^2e_{jj}\otimes e_{11}\\
&+c(x,y)s(x,y)e_{1j}\otimes e_{j1}+c(x,y)s(x,y)e_{j1}\otimes e_{1j})\\
&+\sum_{j=2}^n 
(s(x,y)^2e_{11}\otimes e_{jj}+c(x,y)^2e_{jj}\otimes e_{11}\\
&-c(x,y)s(x,y)e_{1j}\otimes e_{j1}-c(x,y)s(x,y)e_{j1}\otimes e_{1j}),
\end{align*}
where $z(x)=e^{2\pi i x},$
$$c(x,y)=\cos \frac{\pi y}{4x},$$
$$s(x,y)=\sin \frac{\pi y}{4x},$$ 
Then $[w]\in K_1(J\rtimes_\theta \Z_2)$ satisfies
$$\pi(w)=(ze_{11}\oplus ze_{11})e_++1-(e_{11}\oplus e_{11})e_+,$$
which is a generator of 
$K_1((S\M_n\otimes \C\oplus\C\otimes S\M_n)\rtimes_\theta \Z_2).$

Let $D:J\rtimes_\theta\Z_2\longrightarrow S(\cp{\M_n})$ be the homomorphism arising from 
the restriction map to the diagonal set, 
and let $D^0$ be the restriction of $D$ to $\cp{(S\M_n)}$. 
Note that the flip on $\M_n^{\otimes 2}$ is the 
inner automorphism induced by a unitary $\cE\in \M_n\otimes \M_n$ given by   
$$\cE=\sum_{ij}e_{ij}\otimes e_{ji}.$$
Thus $\cp{\M_n}$ is isomorphic to $\M_n^{\otimes 2}\oplus \M_n^{\otimes 2}$, and 
the two minimal central projections of $\cp{\M_n}$ are 
$$e_+'=\frac{1+\lambda'}{2},\quad 
e_-'=\frac{1-\lambda'}{2},$$
where $\lambda'=\cE\lambda$. 
Thus the canonical generators of $K_0(\cp{\M_n})\cong\Z^2$ are 
$[(e_{11}\otimes e_{11})e'_{+}]$, $[(e_{11}\otimes e_{11})e'_{-}]$. 
We denote by $\eta$ the natural isomorphism from 
$K_1(S(\cp{\M_n}))$ onto $K_0(\cp{\M_n})$. 
The $K$-groups of $\cp{(S\M_n)}$ can be computed as in Theorem \ref{SC}, 
and Lemma \ref{restriction} implies that 
$$\eta\circ D^0_* :K_1(\cp{(S\M_n)})\longrightarrow K_0(\cp{\M_n})$$ 
is an injective map whose image is 
$$\Z ([(e_{11}\otimes e_{11})e'_+]-[(e_{11}\otimes e_{11})e'_-]).$$ 
We choose an element $g\in K_1(\cp{(S\M_n)})$ satisfying 
$$\eta\circ D^0_*(g)=
[(e_{11}\otimes e_{11})e'_+]-[(e_{11}\otimes e_{11})e'_-].$$
Then (\ref{D3}) shows that $\{[w], \iota_*(g)\}$ generate 
$K_1(J\rtimes_\theta \Z_2)\cong \Z^2$. 

Now (\ref{D1}) implies the 6-term exact sequence  
\begin{equation}\label{D4}
\6t{0}{K_0(\cp{D_n})}{K_0(\C\Z_2)\cong\Z^2}{K_1(J\rtimes_\theta \Z_2)\cong \Z^2}{K_1(\cp{D_n})}{0}\quad . 
\end{equation}
We show 
\begin{equation}\label{ED1}
\delta([e_+])=-n[w]+\frac{n(n-1)}{2}\iota_*(g),\end{equation}
\begin{equation}\label{ED2}
\delta([e_-])=-n[w]+\frac{n(n+1)}{2}\iota_*(g),\end{equation}
which together with Lemma \ref{ED} finishes the proof. 

Let $f(x,y)=(1-x)(1-y)$. 
Then $f\in D_n\otimes D_n$, and 
$$\delta([e_+])=[e^{2\pi i f}e_++e_-]\in K_1(\cp J),$$ 
$$\delta([e_-])=[e^{2\pi i f}e_-+e_+]\in K_1(\cp J).$$
Thus  
$$\pi_*\circ\delta([e_+])=\pi_*\circ\delta([e_-])=-n\pi_*([w]),$$
and there exist integers $k$, $l$ such that 
$$\delta([e_+])=-n[w]+k\iota_*(g),$$
$$\delta([e_-])=-n[w]+l\iota_*(g).$$
Since $\eta\circ D_*\circ \iota_*=\eta\circ D^0_*$ and $D^0_*$ 
is injective, 
the two numbers $k$ and $l$ are determine by 
$$\eta\circ D_*\circ \delta([e_+])=-n\eta\circ D_*([w])+k\eta\circ D_*^0(g),$$
$$\eta\circ D_*\circ \delta([e_-])=-n\eta\circ D_*([w])+l\eta\circ D_*^0(g),$$
or equivalently, by 
\begin{equation}\label{ED3}
\eta\circ D_*\circ \delta([e_+])+n\eta\circ D_*([w])=k
([(e_{11}\otimes e_{11})e'_+]-[(e_{11}\otimes e_{11})e'_-]),\end{equation}
\begin{equation}\label{ED4}
\eta\circ D_*\circ \delta([e_-])+n\eta\circ D_*([w])=l
([(e_{11}\otimes e_{11})e'_+]-[(e_{11}\otimes e_{11})e'_-]).\end{equation}
Now our task is to compute the left-hand sides of Eq.(\ref{ED3}) and Eq.(\ref{ED4}). 

The first terms are
\begin{equation}\label{ED5}
\eta\circ D_*\circ \delta([e_+])=\eta\circ D_*([e^{2\pi if}e_++e_-])=-[e_+],\end{equation}
\begin{equation}\label{ED6} \eta\circ D_*\circ \delta([e_-])=\eta\circ D_*([e^{2\pi if}e_-+e_+])=-[e_-].
\end{equation}
Letting $p_j=(e_{11}\otimes e_{jj}+e_{jj}\otimes e_{11}
+e_{1j}\otimes e_{j1}+e_{j1}\otimes e_{1j})/2$, we get the second terms as   
\begin{equation}\label{ED7} 
\eta\circ D_*([w])=[(e_{11}\otimes e_{11})e_+]
+\sum_{j=2}^n[p_je_+].\end{equation}
Using the fact that two projections $\frac{1+\cE}{2}$ and $\frac{1-\cE}{2}$ have rank 
$\frac{n(n+1)}{2}$ and $\frac{n(n-1)}{2}$ respectively, we get the following equality in $K_0(\cp{\M_n})$:  
\begin{align*}
[e_+]&=[\frac{1+\cE\lambda'}{2}]
=[\frac{1+\cE}{2}e_+']+[\frac{1-\cE}{2}e_-']\\
&=\frac{n(n+1)}{2}[(e_{11}\otimes e_{11})e_+']
+\frac{n(n-1)}{2}[(e_{11}\otimes e_{11})e_-'].
\end{align*}
In a similar way, we get 
$$[e_-]=\frac{n(n-1)}{2}[(e_{11}\otimes e_{11})e'_+]
+\frac{n(n+1)}{2}[(e_{11}\otimes e_{11})e'_-],$$
$$[p_je_+]=[(e_{11}\otimes e_{11})e'_+].$$
Therefore from Eq.(\ref{ED3})-(\ref{ED7}), 
we obtain $k=\frac{n^2-n}{2}$, $l=\frac{n^2-n}{2}$, and Eq.(\ref{ED1}), Eq.(\ref{ED2}). 
\end{proof}

\begin{remark} In the above, the dual action $\hat{\theta}$ acts on the generators $\iota_*(g)$ and $[w]$ of 
$K_1({D_n})$ as $\hat{\theta}_*(\iota_*(g))=-\iota_*(g)$ and $\hat{\theta}_*([w])=[w]+\iota_*(g)$. 
\end{remark}

%%%%%%%%%%%%%%%%%%%%%%%%%%%%%%%%%
% References
%%%%%%%%%%%%%%%%%%%%%%%%%%%%%%%%%

\end{document}